\theoremstyle{thmstyleone}%
\newtheorem{theorem}{Theorem}
\newtheorem{proposition}[theorem]{Proposition}%
\theoremstyle{thmstyletwo}%
\newtheorem{example}{Example}%
\newtheorem{lemma}{Lemma}%
\theoremstyle{thmstylethree}%
\newtheorem{definition}{Definition}%
\newcommand{\cL}{{\cal L}}
\begin{document}

\title[On Log-Concave Operator Acting on Sequences]{On Log-Concave Operator Acting on Sequences and Series}

\author*[1]{\fnm{Piero} \sur{Giacomelli}}\email{pgiacome@gmail.com}



\abstract
{
    In this paper, we investigate the properties of sequences and series under the action of the log-concave operator \(\mathcal{L}\). We explore the relationship between the convergence of a sequence \((a_k)\) and the convergence of sequences and series derived by applying \(\mathcal{L}\) iteratively.  These results highlight the strong regularity properties of log-concave sequences and provide a framework for analyzing the convergence of sequences and series derived from the log-concave operator. The findings have implications for combinatorics, probability, optimization, and related fields, opening new avenues for further research on the behavior of log-concave sequences and their associated operators.
}

\keywords{Log-concave sequences, Convergence criteria,Iterated operators,Series analysis}

\maketitle

\section{Introduction}\label{sec1}

The study of log-concave sequences and their associated operators has emerged as a central topic in modern mathematics, with deep connections to combinatorics, probability, optimization, and functional analysis.A sequence $(a_k)$
 of real numbers is said to be log-concave if it satisfies the inequality:
\begin{equation*}
    a_k^2 \geq a_{k+1}a_{k-1}
\end{equation*}
for all $k\geq 1$ 
This simple yet powerful condition has far-reaching implications, as it encodes a form of "multiplicative decay" or "unimodality" in the sequence. Log-concave sequences arise naturally in a variety of contexts: including Hodge-theory \cite{Huh2020}, relation with real-rooted polynomials \cite{Wagner1992} and probability and statistics \cite{SaumardWellner2014}. There are also several open conjectures regarding log-concavity, in particular the Boros-Moll conjecture that the rows of Pascal's triangle are infinitely log-concave remains open in full generality, though significant progress has been made for specific cases\cite{BorosMoll2004} with the help of Computer Assisted Decomposition (CAD)\cite{KauersPaule2007} 
In this short note, we are interested to analyze what happens when we take a generic sequence and we apply the log-concave operator. We are interested in understanding what can be said about the sequence obtained by applying the log-concave operator to a real sequence that converges or diverges. Moreover, given a log-concave operator sequence that converges or diverges, what can we tell about the original sequence. Some introductory results will also be applied to the associated series. 

\section{Definition}\label{definitions}
Before we move on, we recall some definition, we refer to the Mcc Namara work \cite{McNamaraSagan2010} for the notation that we use here. Let $(a_k)=a_0,a_1,a_2,\ldots$  be a sequence of real numbers.  It will be convenient to extend the sequence to negative indices by letting $a_k=0$ for $k<0$. Also, if $(a_k)=a_0,a_1,\ldots,a_n$ is a finite
sequence, then let $a_k=0$ for $k>n$.

\begin{definition}
Let $(a_k), k \in \mathbb{N}$ a sequence of real numbers, we define the {\it $\cL$-operator\/} on sequences to be $\cL(a_k)=(b_k)$
where 
\begin{equation*}
b_k=a_k^2-a_{k+1}a_{k-1}.          
\end{equation*}
We say that a sequence $(a_k)$ is log-concave if the sequence $(b_k) = \cL(a_k)$ is non-negative (i.e.,  $b_k \geq 0$ or $a_k^2 \geq a_{k+1}a_{k-1}, \forall k \in \mathbb{N}$).    
\end{definition}

We also define $\cL^i(a_k)$ the $\cL$-operator applied to itself starting from the sequence $(a_k)$, $i$ times:

\begin{equation*}
\cL^2(a_k) = \cL(\cL(a_k)), \cL^3(a_k) = \cL(\cL(\cL(a_k))),\dots, \cL^i(a_k), \dots
\end{equation*}
\newline
We call a sequence $i$-fold log-concave if $\cL^i(a_k)$
is a non-negative sequence.  
So log-concavity in the ordinary sense is $1$-fold log-concavity, and as last we have that
\begin{definition}
If $\forall i \in \mathbb{N}$, $\cL^i(a_k)$ is nonnegative,, then we say that the sequence $(a_k)$ is said to be infinite log-concave ($\infty$-log-concave).    
\end{definition}

\section{Preliminary results}
As we stated we are interested in finding the relation between the convergence of the initial sequence $(a_k)$ and the one between $\cL(a_k)$
Starting from the previous definitions we can have this first relationship:
\begin{lemma}
\label{lemma:simple_convergence}
If $(a_k)$ converges then $(b_k) = \cL(a_k)$  converges to zero.    
\end{lemma}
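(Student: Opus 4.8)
The plan is to reduce the statement to the elementary algebra of limits for convergent real sequences. First I would let $L \in \bbR$ denote the limit of $(a_k)$, so that $a_k \to L$ as $k \to \infty$. The key preliminary observation is that the shifted sequences $(a_{k+1})$ and $(a_{k-1})$ also converge to $L$: shifting a convergent sequence by a fixed finite offset does not change its limit (and the convention $a_k = 0$ for $k<0$ only affects finitely many terms, hence is irrelevant for the limit). I would state this explicitly since the definition of $b_k$ mixes three different indices.

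Next I would apply the standard limit laws termwise to the defining formula $b_k = a_k^2 - a_{k+1}a_{k-1}$. Since $a_k \to L$, continuity of squaring (or the product rule for limits) gives $a_k^2 \to L^2$; similarly $a_{k+1}a_{k-1} \to L \cdot L = L^2$ by the product rule applied to the two shifted sequences. Subtracting, $b_k = a_k^2 - a_{k+1}a_{k-1} \to L^2 - L^2 = 0$, which is exactly the claim that $\cL(a_k)$ converges to zero. If an $\varepsilon$-$\delta$ presentation is preferred, I would instead fix $\varepsilon > 0$, choose $N$ so that $|a_k - L| < \delta$ for all $k \geq N$ with $\delta$ small enough (depending on $\varepsilon$ and a bound $M$ for $|a_k|$, which exists because convergent sequences are bounded), and then estimate $|b_k| = |a_k^2 - L^2 - (a_{k+1}a_{k-1} - L^2)| \le |a_k - L|\,|a_k + L| + |a_{k+1} - L|\,|a_{k-1}| + |L|\,|a_{k-1} - L|$, each term of which is controlled by $\delta$ times a bounded quantity.

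There is essentially no serious obstacle here; the only points requiring a word of care are the handling of the index shifts (so that one is genuinely allowed to pass to the limit in all three factors simultaneously) and, in the $\varepsilon$-$\delta$ version, invoking boundedness of $(a_k)$ to control the cross terms. I would therefore keep the argument short, emphasizing the shift-invariance of limits and the product/difference laws, and note in passing that the limit of $(b_k)$ being zero is independent of the value of $L$ — a remark that foreshadows later results where the converse direction (recovering information about $(a_k)$ from $\cL(a_k)$) is genuinely more delicate.
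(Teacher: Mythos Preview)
Your argument is correct and matches the paper's own proof essentially line for line: let $L$ be the limit of $(a_k)$, use the product and difference laws to get $a_k^2 \to L^2$ and $a_{k+1}a_{k-1} \to L^2$, and subtract. The extra remarks on index shifts and the optional $\varepsilon$--$\delta$ estimate are sound but go beyond what the paper records.
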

\begin{proof}
    If \((a_k)\) converges to \(L\), then:
    \[
    \lim_{k \to \infty} a_k^2 = L^2, \quad \lim_{k \to \infty} a_{k-1} a_{k+1} = L^2.
    \]
    Thus:
    \[
    \lim_{k \to \infty} b_k = \lim_{k \to \infty} (a_k^2 - a_{k-1} a_{k+1}) = L^2 - L^2 = 0.
    \]
    Therefore, if \((a_k)\) converges to \(L\), then \(\mathcal{L}(a_k)\) converges to 0.
\end{proof}

Using this single lemma, we can state by simple induction the following.

\begin{proposition}
\label{proposition:l_i}
    Let $(a_k)$ a real sequence if it converges then every sequence  $(b_k)^i=\cL^i(a_k)$ converges to zero.  
\end{proposition}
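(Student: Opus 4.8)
The plan is to proceed by induction on $i$, using Lemma~\ref{lemma:simple_convergence} as the engine. The base case $i=1$ is exactly Lemma~\ref{lemma:simple_convergence}: if $(a_k)$ converges, then $\cL(a_k) = (b_k)^1$ converges (to zero). For the inductive step, I would assume that for some $i \geq 1$ the sequence $(b_k)^i = \cL^i(a_k)$ converges. A convergent sequence is in particular a convergent real sequence, so Lemma~\ref{lemma:simple_convergence} applies to it directly: $\cL\bigl((b_k)^i\bigr) = \cL^{i+1}(a_k) = (b_k)^{i+1}$ converges to zero. This closes the induction and gives the stronger conclusion that every iterate $\cL^i(a_k)$ for $i \geq 1$ converges \emph{to zero}, not merely that it converges.

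One small point I would be careful to state cleanly is the role of the hypothesis. The proposition only needs that $(a_k)$ converges to \emph{some} limit $L$; the value of $L$ is irrelevant because $\cL$ already annihilates it at the first step (the first iterate tends to $0$), and thereafter the limit fed into each successive application of Lemma~\ref{lemma:simple_convergence} is $0$. So the induction hypothesis I carry forward should be phrased as ``$\cL^i(a_k)$ converges'' (equivalently, converges to $0$ for $i \geq 1$), which is precisely the hypothesis Lemma~\ref{lemma:simple_convergence} requires of its input sequence.

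I do not expect a genuine obstacle here: the only thing to watch is that Lemma~\ref{lemma:simple_convergence} is stated for an arbitrary convergent sequence of reals, so it is legitimate to re-apply it to $\cL^i(a_k)$, which is itself such a sequence. The argument is therefore a one-line induction, and the write-up should emphasize that each application of $\cL$ both preserves convergence and forces the limit to be $0$.
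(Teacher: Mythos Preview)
Your proposal is correct and mirrors the paper's own argument: both proceed by induction on $i$, invoking Lemma~\ref{lemma:simple_convergence} for the base case and then reapplying it to the convergent sequence $\cL^i(a_k)$ to conclude that $\cL^{i+1}(a_k)\to 0$. Your write-up is, if anything, a bit cleaner about what the inductive hypothesis needs to say.
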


\begin{proof}
    The following steps verify the proposition.
    \begin{itemize}
        \item{For $i=1$ we have the lemma \ref{lemma:simple_convergence}.}
        \item{If the inductive hypothesis is satisfied $i>1$ then $\cL^i(a_k)$ converges to zero then always by lemma \ref{lemma:simple_convergence} $\cL^{i+1}(a_k)$ converges.} we have:
        \begin{equation*}
          \cL^{i+1}(a_k) = (\cL^{i}(a_k))^2 - (\cL^{i}(a_{k+1}))(\cL^{i}(a_{k-1}))                    
        \end{equation*}
        so
        \begin{align*}
         \lim_{k \to +\infty} \cL^{i+1}(a_k) & = \\
         \lim_{k \to +\infty} (\cL^{i}(a_k))^2 & - \lim_{k \to +\infty} (\cL^{i}(a_{k+1}))(\cL^{i}(a_{k-1}))  = 0
        \end{align*}        
    \end{itemize}
\end{proof}

In general, if $(a_k)$ does not converge, is it possible that $\cL(a_k)$ converges as for the following example:

\begin{example}
Consider the sequence:
\[
a_k = (-1)^k.
\]
This sequence does not converge as it oscillates between \(-1\) and \(1\). However, the log-concave sequence \((b_k)\) is:
\[
b_k = a_k^2 - a_{k+1} a_{k-1} = (-1)^{2k} - (-1)^{2k}  = 1 - 1 = 0.
\]
Thus, \((b_k)\) is the constant sequence \(0\), which converges. This shows that \((b_k)\) can converge even if \((a_k)\) does not.
\end{example}
So, the convergence of the sequence $(b_k)$ is not a sufficient condition to affirm the convergence of the sequence $(a_k)$. 
Using some additional conditions to the sequence $(b_k)$ it is possible to obtain a criterion for the convergence of $(a_k)$.
\begin{proposition}
Let $(a_k)$ a real sequence and $(b_k) =\cL(a_k)$ the corresponding sequence once applied the $\cL$-operator. If $(a_k)$ is monotonic and $(b_k)$ converges then $a_k$ converges.
\end{proposition}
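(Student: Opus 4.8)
\emph{Proof proposal.} The natural first move is the order-theoretic dichotomy for monotone real sequences: a monotone sequence either converges to a finite limit or else diverges to $+\infty$ (if nondecreasing) or $-\infty$ (if nonincreasing). Because $\cL$ is invariant under a global change of sign — the value $b_k = a_k^2 - a_{k+1}a_{k-1}$ is unchanged when every $a_j$ is replaced by $-a_j$ — we may assume without loss of generality that $(a_k)$ is nondecreasing, so the task reduces to excluding the case $a_k \to +\infty$. I would therefore argue by contradiction: assume $a_k \to +\infty$ and try to show that $(b_k) = \cL(a_k)$ cannot converge.

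The key algebraic step would be to introduce the forward differences $d_k := a_{k+1} - a_k \ge 0$ and rewrite
\[
b_k \;=\; a_k^2 - a_{k+1} a_{k-1} \;=\; a_k^2 - (a_k + d_k)(a_k - d_{k-1}) \;=\; a_k\,(d_{k-1} - d_k) + d_k d_{k-1}.
\]
This puts the decisive quantity $a_k\,(d_{k-1} - d_k)$ — the large size $a_k$ times the second difference of the sequence — centre stage, and the hope would be that boundedness of $(b_k)$ forces $d_{k-1} - d_k \to 0$ so quickly that $(a_k)$ cannot in fact run off to infinity.

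The main obstacle is precisely this implication, and I expect the plan to fail here outright: the operator $\cL$ annihilates geometric progressions, since for $a_k = r^k$ one has $b_k = r^{2k} - r^{k+1} r^{k-1} \equiv 0$. Taking $r > 1$ yields a strictly increasing sequence with $a_k \to +\infty$ whose $\cL$-image is (eventually) the constant sequence $0$, hence convergent; and $a_k = k$ (with $b_k \equiv 1$ for $k \ge 1$) and $a_k = \sqrt{k}$ (with $b_k \to 0$) are further monotone divergent counterexamples. So monotonicity of $(a_k)$ together with convergence of $(b_k)$ does not force $(a_k)$ to converge, and the proposition needs a stronger hypothesis — for instance that $(a_k)$ is bounded, which makes the conclusion immediate (and then $(b_k) \to 0$ by Lemma~\ref{lemma:simple_convergence}); note that merely controlling the ratios, e.g.\ $a_{k+1}/a_k \to 1$, is still not enough, as the example $a_k = \sqrt{k}$ shows. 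My recommendation would be to restate the proposition with boundedness of $(a_k)$ as an explicit hypothesis.
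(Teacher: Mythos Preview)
Your analysis is correct: the proposition as stated is false, and your counterexamples $a_k = r^k$ with $r>1$, $a_k = k$, and $a_k = \sqrt{k}$ all work (for the first, note that under the paper's convention $a_{-1}=0$ one gets $b_0 = a_0^2$ and $b_k = 0$ for $k\ge 1$, still convergent). The paper's own proof tacitly concedes the point: although the proposition's hypotheses list only monotonicity of $(a_k)$ and convergence of $(b_k)$, the very first line of the proof inserts ``$(a_k)$ is bounded'' as hypothesis~(1) and then simply invokes the monotone convergence theorem, after which the convergence of $(b_k)$ plays no role beyond the observation that its limit must be~$0$. So your recommendation --- restate the proposition with boundedness of $(a_k)$ as an explicit hypothesis --- is exactly what the paper's proof already assumes, and your write-up improves on the paper by exhibiting explicit counterexamples that show the extra hypothesis is genuinely needed rather than merely convenient.
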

\begin{proof}
We recall prior that the proposition hypotheses are:
\begin{enumerate}
    \item \((a_k)\) is bounded, i.e., there exists \(M > 0\) such that \(|a_k| \leq M\) for all \(k\).
    \item \((a_k)\) is monotonic, i.e., either \(a_{k+1} \geq a_k\) for all \(k\) (monotonic increasing) or \(a_{k+1} \leq a_k\) for all \(k\) (monotonic decreasing).
    \item \((b_k) = \mathcal{L}(a_k)\) converges to some limit \(L\).
\end{enumerate}

By the monotone convergence theorem, a bounded and monotonic sequence \((a_k)\) must converge to some limit \(a\). Since \((b_k)\) converges to \(L\), we have:
\[
\lim_{k \to \infty} b_k = \lim_{k \to \infty} (a_k^2 - a_{k-1} a_{k+1}) = L.
\]
If \((a_k)\) converges to \(a\), then:
\[
\lim_{k \to \infty} a_k^2 = a^2, \quad \lim_{k \to \infty} a_{k-1} a_{k+1} = a^2.
\]
Thus:
\[
L = \lim_{k \to \infty} b_k = a^2 - a^2 = 0.
\]
So, if \((a_k)\) is bounded and monotonic, and \((b_k)\) converges, then \((a_k)\) converges to some \(a\), and \(L = 0\).

\end{proof}
The previous results did not useed the log-concave property so we would like to understand what happens were this assumption is made,
 the following one is a first results.
\begin{proposition}
\label{prop:convergence_under_log_concavity}
If \((a_k)\) is log-concave, and $(b_k) =\cL(a_k)$ converges then \((a_k)\)  converges.
\end{proposition}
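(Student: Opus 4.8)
The plan is to exploit the standard reformulation of log-concavity in terms of consecutive ratios. First I would reduce to the case $a_k > 0$ for all $k$, treating sign changes and zeros as a separate preliminary: when $a_{k-1}$ and $a_{k+1}$ have opposite signs the inequality $a_k^2 \ge a_{k+1}a_{k-1}$ holds automatically, so the sign pattern is essentially unconstrained and must be handled by hand, while an interior zero forces a degenerate or terminating pattern. Assuming positivity, log-concavity is \emph{exactly} the statement that the ratio sequence $r_k = a_{k+1}/a_k$ is non-increasing. A non-increasing sequence bounded below by $0$ converges, so $r_k \to r$ for some finite $r \ge 0$. The engine of the proof is the identity $b_k = a_k^2 - a_{k+1}a_{k-1} = a_k^2\bigl(1 - r_k/r_{k-1}\bigr)$, which rewrites the hypothesis that $(b_k)=\cL(a_k)$ converges as a condition coupling the scale $a_k^2$ with the non-negative ``curvature'' factor $1 - r_k/r_{k-1}$.

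Next I would separate the behaviour according to the limiting ratio $r$. If $r < 1$, then eventually $r_k \le (1+r)/2 < 1$, so $(a_k)$ decays geometrically and $a_k \to 0$; in particular it converges and this case is complete. The remaining and decisive case is $r \ge 1$, where the ratios stay at least $1$ and $(a_k)$ is non-decreasing from some index onward. Here I would try to force a contradiction out of the convergence of $(b_k)$, since that is the only remaining hypothesis with any bite.

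The hard part is precisely this case $r \ge 1$. Writing $b_k = a_k^2\bigl(1 - r_k/r_{k-1}\bigr)$, when $r \ge 1$ the factor $a_k^2$ is bounded below and, if $r > 1$, tends to $+\infty$, so convergence of $(b_k)$ forces $1 - r_k/r_{k-1} \to 0$ at a compensating rate. The genuine obstruction is that this is entirely consistent with divergence of $(a_k)$: for a purely geometric sequence $a_k = c^k$ the ratios are constant, hence $1 - r_k/r_{k-1} \equiv 0$ and $b_k \equiv 0$ converges, yet $a_k \to \infty$ when $c>1$. Thus convergence of $(b_k)$ controls only the second-order curvature of $\log a_k$ and says nothing about its overall scale. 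I therefore expect that closing this case cleanly requires either an additional boundedness assumption on $(a_k)$ (as in the preceding proposition) or a hypothesis excluding $r \ge 1$; the passage from ``the ratios converge'' to ``$(a_k)$ converges'' is exactly where the argument must do its real work, and is the step I would scrutinise most carefully.
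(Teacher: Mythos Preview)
Your approach mirrors the paper's exactly: both pass to the ratio sequence $r_k = a_{k+1}/a_k$, use log-concavity to get monotonicity of $(r_k)$, deduce a limit $r \ge 0$, and split on the value of $r$. Both arguments dispose of $r<1$ by geometric decay. The difference is that you stop and flag the case $r \ge 1$ as an obstruction, while the paper claims to finish it.

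Your counterexample is decisive and shows the proposition is false as stated. Take $a_k = 2^k$. With the paper's convention $a_{-1}=0$ one has $b_0 = a_0^2 = 1$ and $b_k = 4^k - 2^{k+1}\cdot 2^{k-1} = 0$ for $k\ge 1$, so $(b_k)$ is non-negative (hence $(a_k)$ is log-concave) and converges to $0$, yet $a_k \to \infty$. The same phenomenon occurs for any geometric sequence with ratio $c>1$: $b_k$ is eventually identically zero while $a_k$ diverges. Your identity $b_k = a_k^2(1 - r_k/r_{k-1})$ explains exactly why: $\cL$ sees only second-order curvature of $\log a_k$ and is blind to overall scale.

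The paper's proof has precisely the gap you anticipated. Its treatment of $r>1$ reads ``$(a_k)$ grows without bound, but this contradicts the log-concavity condition unless $(a_k)$ is unbounded,'' which is not a contradiction at all. Its treatment of $r=1$ asserts that $(a_k)$ ``behaves like a geometric sequence with ratio $1$'' and hence converges to a constant; this is also unjustified, since $r_k \downarrow 1$ is compatible with $a_k \to \infty$ (e.g.\ $a_k = k+1$ has $r_k = (k+2)/(k+1) \to 1$, is log-concave, and has $b_k \to 1$). So your scrutiny of the $r \ge 1$ case is well placed: that is exactly where the paper's argument fails, and the statement itself needs an additional hypothesis such as boundedness of $(a_k)$ to be salvageable.
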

\begin{proof}
Since \((a_k)\) is log-concave, \((b_k)\) is nonnegative, i.e., \(b_k \geq 0\) for all \(k\). If \((b_k)\) converges to \(L\), then \(L \geq 0\).

From log-concavity, we have:
\[
a_k^2 \geq a_{k-1} a_{k+1}.
\]
Rearranging, this becomes:
\[
\frac{a_{k+1}}{a_k} \leq \frac{a_k}{a_{k-1}}.
\]
This inequality suggests that the ratio \(\frac{a_{k+1}}{a_k}\) is non-increasing. 

If \((a_k)\) is positive and bounded away from 0, then the sequence \(\frac{a_{k+1}}{a_k}\) is bounded and non-increasing, so it converges to some limit \(r \geq 0\).

We have the following cases
\begin{itemize}
    \item If \(r = 1\), then \((a_k)\) behaves like a geometric sequence with ratio 1, implying that  converges to a constant.
    \item If \(r < 1\), then \((a_k)\) decays to 0.
 
\item If \(r > 1\), then \((a_k)\) grows without bound, but this contradicts the log-concavity condition unless \((a_k)\) is unbounded.

\end{itemize}
Thus, under the log-concavity assumption, \((a_k)\) must converge (either to a constant or to 0).
\end{proof}

As for proposition \ref{prop:convergence_under_log_concavity}, a natural question arise on trying to generalize the previous result in particular to the \(\infty\)-log-concave case. 
Let us consider a real sequence $(a_k)$ as above, but suppose that \((a_k)\) is  is \(\infty\)-log-concave, i.e. for every index $i \in \mathbb{N}, i \geq 1$ finding that the sequence
\(\mathcal{L}^i(a_k)\) is non-negative what can we tell about the convergence of \(\mathcal{L}^i(a_k)\)? 
So we are interested in understanding whether infinite log-concavity implies the convergence of \(\mathcal{L}^i(a_k)\).

By definition, if \((a_k)\) is infinitely log-concave, then \(\mathcal{L}^i(a_k)\) is non-negative for all \(i \in \mathbb{N}\). However, nonnegativity does not imply convergence because in general a sequence can be non-negative but still diverge (e.g., \(a_k = k\)). For an infinitely log-concave sequence, each application of \(\mathcal{L}\) tends to "smooth out" the sequence, reducing its growth rate. If \((a_k)\) is bounded and infinitely log-concave, then \(\mathcal{L}^i(a_k)\) is also bounded and nonnegative. However, boundedness alone does not guarantee convergence (e.g., \(a_k = 1 + (-1)^k\) is bounded but does not converge). 
We also have some specific sequences that have the searched properties like the following:
\begin{example}
Some specific sequences that are \(\infty\)-log-concave and converge, have all the \(\mathcal{L}^i(a_k)\) sequences that converge like the following one
    \begin{itemize}
        \item If \((a_k)\) is a constant sequence, then \(\mathcal{L}^i(a_k)\) is the zero sequence for all \(i \geq 1\), which trivially converges.
        \item If \((a_k)\) is a geometric sequence (e.g., \(a_k = r^k\) for \(0 < r < 1\)), then \(\mathcal{L}^i(a_k)\) converges to \(0\) for all \(i \geq 1\).
    \end{itemize}    
\end{example}
Viceversa, there are the opposite cases where we have a sequence \((a_k)\) that is infinitely log-concave but has divergent \(\mathcal{L}^i(a_k)\)

\begin{example}{Counterexample: Infinitely Log-Concave Sequence  \(\mathcal{L}^i(a_k)\)} that are divergents
Consider the following sequence:
\[
a_k = 1 + \frac{1}{k+1}.
\]
This sequence is positive, decreasing, and converges to \(1\). However, it is not infinitely log-concave. To construct an infinitely logarithmic concave sequence, we need to ensure that \(\mathcal{L}^i(a_k)\) is non-negative for all \(i\).

Now, consider the sequence:
\[
a_k = 1.
\]
This sequence is infinitely log-concave, and \(\mathcal{L}^i(a_k) = 0\) for all \(i \geq 1\), which trivially converges.

However, if we modify the sequence slightly to:
\[
a_k = 1 + \frac{1}{2^k},
\]
then:
\begin{align*}
\mathcal{L}(a_k) & = \left(1 + \frac{1}{2^k}\right)^2 - \left(1 + \frac{1}{2^{k-1}}\right)\left(1 + \frac{1}{2^{k+1}}\right) \\ & = 
\left(1 + \frac{2}{2^k} + \frac{1}{2^{2k}}\right) - \left(1 + \frac{2}{2^k} + \frac{1}{2 \cdot 2^k} + \frac{1}{2^{2k}}\right) \\ & =  
-\frac{1}{2^{k+1}}
\end{align*}
This sequence is still infinitely log-concave, and \(\mathcal{L}^i(a_k)\) converges to \(0\) for all \(i \geq 1\).
\end{example}
We need some stronger assumptions to state a theorem for deciding the convergence of {\it $i$-fold log-concave\/}. 
As a preliminary result we observe that the following 

\begin{lemma}
\label{lemma:boundness}
The \(\mathcal{L}\)-operator preserv boundness.
\end{lemma}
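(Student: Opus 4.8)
The plan is to unwind the definition of $\cL$ and apply the triangle inequality together with submultiplicativity of the absolute value. Suppose $(a_k)$ is bounded, so there is an $M>0$ with $|a_k|\le M$ for all $k$ (including the convention $a_k=0$ for $k<0$, which only makes the relevant terms smaller). Writing $b_k=a_k^2-a_{k+1}a_{k-1}$, I would estimate
\[
|b_k| \;=\; \bigl|a_k^2-a_{k+1}a_{k-1}\bigr|\;\le\;|a_k|^2+|a_{k+1}|\,|a_{k-1}|\;\le\;M^2+M^2\;=\;2M^2 .
\]
Hence $(b_k)=\cL(a_k)$ is bounded, with an explicit bound $2M^2$ in terms of the bound for $(a_k)$.

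The only things worth spelling out carefully are the two conventions introduced in Section~\ref{definitions}: for a two-sided-extended or finite sequence the ``extra'' entries are $0$, so they never exceed $M$ and the displayed estimate still holds verbatim. After that, the statement follows immediately, and by induction one also gets that $\cL^i(a_k)$ is bounded for every $i$ whenever $(a_k)$ is bounded, with bound $M_i$ defined by $M_0=M$ and $M_{i+1}=2M_i^2$ (i.e. $M_i\le 2^{\,2^i-1}M^{2^i}$).

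There is essentially no obstacle here; the estimate is a single application of the triangle inequality, so the ``hard part'' is merely bookkeeping — making sure the index-shift conventions do not introduce unbounded terms and stating the bound in a form that is convenient for the iterated use of $\cL$ needed later. If a sharper constant were desired one could split into the cases $\operatorname{sign}(a_{k+1}a_{k-1})$, but $2M^2$ is all that is required for the convergence results that follow, so I would keep the proof to the one-line estimate above.
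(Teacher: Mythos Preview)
Your argument is correct and essentially identical to the paper's: both take an arbitrary bound $M$ on $|a_k|$, apply the triangle inequality and $|xy|\le|x|\,|y|$ to $b_k=a_k^2-a_{k+1}a_{k-1}$, and conclude $|b_k|\le 2M^2$. Your extra remarks about the index conventions and the iterated bound $M_{i+1}=2M_i^2$ are harmless additions that the paper omits.
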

\begin{proof}
We need to show that if $(a_k)$ is bounded (ie. there exists a real positvie number $M$ such that $|a_k| \leq M$) then  \(\mathcal{L}(a_k)\) is also bounded.
By hypotheses if $|a_k| \leq M$ and by definition of  \(\mathcal{L}(a_k)\) we have
\begin{align*}
\vert \mathcal{L}(a_k) \vert & = \vert a_k^2 - a_{k+1}a_{k-1} \vert \\
& \leq \vert a_k^2 \vert + |a_{k+1}a_{k-1} \vert \\
& \leq \vert a_k^2 \vert + |a_{k+1}|\cdot|a_{k-1} \vert \\
& \leq M^2+MM = 2M^2 
\end{align*}
\end{proof}
The lemma will serve us in proving the following:
\begin{theorem}
Let \((a_k)\) be a sequence of real numbers that is infinitely log-concave and converges to a limit \(L\). If \((a_k)\) is bounded and monotonic, then for all \(i \in \mathbb{N}\), the sequence \(\mathcal{L}^i(a_k)\) converges.
\end{theorem}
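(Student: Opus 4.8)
The plan is to prove the statement by a short induction on $i$, using Lemma~\ref{lemma:simple_convergence} together with Lemma~\ref{lemma:boundness}; in fact the conclusion follows from machinery already in place, so part of the work is simply organizing it. First I would note that a sequence converging to $L$ is automatically bounded, so the ``bounded'' hypothesis is free; among the remaining hypotheses, monotonicity is not actually needed for mere convergence of the iterates, while infinite log-concavity guarantees that every $\mathcal{L}^i(a_k)$ with $i\geq 1$ is a non-negative sequence, and Lemma~\ref{lemma:boundness}, applied $i$ times, guarantees that every $\mathcal{L}^i(a_k)$ is bounded. I would record these two facts first, since they pin down the iterates precisely (non-negative, bounded, and --- as shown below --- convergent to $0$).

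For the induction itself: the base case $i=0$ is exactly the hypothesis that $(a_k)$ converges. For the inductive step, assume $\mathcal{L}^i(a_k)$ converges; writing $\mathcal{L}^{i+1}(a_k)=\mathcal{L}\bigl(\mathcal{L}^i(a_k)\bigr)$ and applying Lemma~\ref{lemma:simple_convergence} to the convergent sequence $\mathcal{L}^i(a_k)$ yields that $\mathcal{L}^{i+1}(a_k)$ converges, and in fact converges to $0$. This is exactly the reasoning already used in Proposition~\ref{proposition:l_i}, so the theorem can be deduced from that proposition directly; what the present statement adds is the observation that, under infinite log-concavity, boundedness and monotonicity, each iterate with $i\geq 1$ is additionally non-negative and bounded while still tending to $0$.

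The only point that requires care --- and the one I would flag as the potential obstacle, were one to insist on a ``self-contained'' monotone-convergence-style argument instead of invoking Lemma~\ref{lemma:simple_convergence} --- is monotonicity of the iterates: the $\mathcal{L}$-operator does not in general send a monotonic sequence to a monotonic one, so one cannot simply rerun the monotone convergence theorem at each level. The induction through Lemma~\ref{lemma:simple_convergence} sidesteps this difficulty entirely, which is why I would take that route; the boundedness bookkeeping via Lemma~\ref{lemma:boundness} is then only needed if one also wants explicit bounds on $\mathcal{L}^i(a_k)$ rather than just qualitative convergence.
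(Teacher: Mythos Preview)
Your argument is correct, and its inductive skeleton --- pass from $\mathcal{L}^i(a_k)$ to $\mathcal{L}^{i+1}(a_k)$ via Lemma~\ref{lemma:simple_convergence} --- is the same one the paper uses. The genuine difference is that the paper, in its inductive step, also asserts that each iterate $\mathcal{L}^i(a_k)$ ``inherits'' monotonicity from $(a_k)$ and then invokes the monotone convergence theorem before applying Lemma~\ref{lemma:simple_convergence}; you explicitly decline that route and flag exactly this step as the obstacle, since $\mathcal{L}$ does not in general preserve monotonicity. Your instinct is right: the paper's monotonicity claim is neither proved nor needed (convergence of $(c_k)$ is already the inductive hypothesis, so re-deriving it via monotone convergence is redundant at best). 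What your route buys is a clean, fully justified proof that in effect reduces the theorem to Proposition~\ref{proposition:l_i}; what the paper's route would buy, \emph{if} the monotonicity claim held, is a second, independent reason for convergence at each level --- but as stated that claim is unsubstantiated, so your path is the sound one.
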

\begin{proof}
The proof is done applying induction on index $i$.
\newline
The first step is proving the theorem for $i=1$, so in this case we consider  we consider \(\mathcal{L}(a_k) = (b_k)\), where:
\[
b_k = a_k^2 - a_{k-1}a_{k+1}.
\]
Since \((a_k)\) is log-concave, \(b_k \geq 0\) for all \(k\) so by Lemma \ref{lemma:simple_convergence}, if \((a_k)\) converges to \(L\), then \(\mathcal{L}(a_k)\) converges to \(0\)
Thus, \(\mathcal{L}(a_k)\) converges. Moving forward with the inductive step let us Assume that for some \(i \in \mathbb{N}\), the sequence \(\mathcal{L}^i(a_k)\) converges. We will show that \(\mathcal{L}^{i+1}(a_k)\) also converges.

Let \((c_k) = \mathcal{L}^i(a_k)\). By the inductive hypothesis, \((c_k)\) converges. We now analyze \(\mathcal{L}^{i+1}(a_k) = \mathcal{L}(c_k)\).

By hypotheses $(c_k)$ is bounded  and by the inductive hypothesis, \((c_k)\) converges.
We now analyze \(\mathcal{L}^{i+1}(a_k) = \mathcal{L}(c_k)\).
Since \((a_k)\) is bounded and \(\mathcal{L}^i(a_k)\) is defined iteratively, \((c_k)\) is also bounded. This follows because the log-concave operator \(\mathcal{L}\) preserves boundedness by Lemma \ref{lemma:boundness}. Since \((a_k)\) is monotonic decreasing and log-concave, the sequence \(\mathcal{L}^i(a_k)\) inherits a form of monotonicity. Specifically, \(\mathcal{L}^i(a_k)\) is also monotonic decreasing (or non-increasing). By the monotone convergence theorem, a bounded and monotonic sequence must converge. Since \((c_k)\) is bounded and monotonic, it converges to some limit \(M\). By Lemma \ref{lemma:simple_convergence}, if \((c_k)\) converges to \(M\), then \(\mathcal{L}(c_k)\) converges to \(0\).
Thus, \(\mathcal{L}^{i+1}(a_k)\) converges.
This completes the proof.

\end{proof}

As demonstrated, despite the simplicity of the log-concavity condition, it yields a wealth of profound properties and criteria for sequences of this type. Building on these insights, the next section will introduce the concept of the 
$\mathcal{L}$-serie, derived from mergin the concept of applying the log-concave operator to  sequences. We will explore its properties and establish convergence criteria, further extending the analytical framework developed thus far.

\section{Series associated to the $\mathcal{L}$ operator}
As we have seen if a sequence is log-concave we can establish some convergence criteria on the series derived from the application of the  $\mathcal{L}$ operator. Quite naturally we want to extend this criterion also to the series associated with these sequences. Before proceeding let us give some definions:
\begin{definition}
\label{series:l-series}
    Let $(a_k)$ be a real sequence and $(b_k)= \mathcal{L} (a_k)$ be the sequence derived by applying the $\mathcal{L}$ operator to $(a_k)$.  Let $\sum_ (a_k)$ the series associated to $(a_k)$ (ie. the limit for $k \to +\infty$ of the partial sum).
    \begin{equation*}
        \sum_{k=0}^{+\infty} a_k = \lim_{k \to +\infty}S_k 
    \end{equation*}
    where 
    \begin{equation*}
        S_k = a_0+a_1+a_2 +  \dots + a_k 
    \end{equation*}

We call the  $\mathcal{L}$-serie, the series generated by the partial sum applied to the sequence $b_k$ where $(b_k)=  a_k^2-a_{k+1} a_{k-1}$, explicity
    \begin{equation*}
        \sum_{k=0}^{+\infty} b_k = \sum_{k=0}^{+\infty} a_k^2-a_{k+1} a_{k-1} = \sum_{k=0}^{+\infty} a_k^2- \sum_{k=0}^{+\infty} a_{k+1} a_{k-1}  .
    \end{equation*}
\end{definition}
As we stated, we are interested in analyising the relationship between the convergence of the series \(\sum_k a_k\) and the series \(\sum_k \mathcal{L}(a_k)\), prior to proceeding we get an easy lemma:
\begin{lemma}
Let $\sum_k a_k$ converges so that $a_k^2 \to 0$  and $a_{k+1} a_{k-1} \to 0$ as $k \to +\infty$.
\end{lemma}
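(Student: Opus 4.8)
The plan is to invoke the standard term test (divergence test) for series. First I would recall that a necessary condition for $\sum_{k=0}^{+\infty} a_k$ to converge is $\lim_{k \to +\infty} a_k = 0$: writing $a_k = S_k - S_{k-1}$, if the partial sums converge to a finite limit $S$ then both $S_k \to S$ and $S_{k-1} \to S$, so $a_k = S_k - S_{k-1} \to S - S = 0$.

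From $a_k \to 0$ the two assertions follow by the elementary algebra of limits. For the first, $a_k^2 = a_k \cdot a_k \to 0 \cdot 0 = 0$ by the product rule for limits (equivalently, by continuity of $t \mapsto t^2$ at $0$). For the second, the shifted sequences $(a_{k+1})$ and $(a_{k-1})$ each converge to $0$ — a shift of the index does not affect the limit, and the convention $a_k = 0$ for $k < 0$ is irrelevant since we let $k \to +\infty$ — hence $a_{k+1} a_{k-1} \to 0 \cdot 0 = 0$, again by the product rule.

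There is no genuine obstacle here: the statement is an immediate corollary of the divergence test together with the arithmetic of limits. The only point worth a word of care is that ``$a_{k-1} \to 0$'' should be read via the reindexing $j = k-1$, so that $\lim_{k \to +\infty} a_{k-1} = \lim_{j \to +\infty} a_j = 0$; once this is observed, the product $a_{k+1} a_{k-1}$ is treated exactly as $a_k^2$ above, and the proof is complete.
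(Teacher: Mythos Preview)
Your proof is correct and follows essentially the same approach as the paper: both invoke the divergence test to get $a_k \to 0$ and then conclude $a_k^2 \to 0$ and $a_{k+1}a_{k-1} \to 0$ by the algebra of limits. Your version is simply more explicit about the partial-sum argument and the reindexing, whereas the paper states these steps tersely.
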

\begin{proof}
    The proof follows directly from the fact that if
    \begin{equation*}
        \sum_{k=0}^{+\infty} a_k = L
    \end{equation*}
    then 
    \begin{equation*}
        \lim_{k\varepsilon +\infty}a_k = 0,
    \end{equation*}
but then
    \begin{equation*}
        \lim_{k\varepsilon +\infty}a^2_k = 0,
    \end{equation*}
and 

    \begin{equation*}
        \lim_{k\varepsilon +\infty}a_{k+1}a_{k-1} = 0,
    \end{equation*}

\end{proof}
We are now ready to this preliminary result on the convergence of $\mathcal{L}$-serie.

\begin{theorem}
\label{theorem:series_without_log_concavity_assumption}
Let \((a_k)\) be a sequence of real numbers with the following properties:
\begin{itemize}
  \item \(\sum_k a_k\) converges absolutely (i.e., \(\sum_k |a_k| < \infty\)).
  \item \((a_k)\) is eventually monotonic (i.e., there exists \(N \in \mathbb{N}\) such that \((a_k)\) is monotonic for all \(k \geq N\)).
\end{itemize}
Then, the series \(\sum \mathcal{L}(a_k)\) converges.
\end{theorem}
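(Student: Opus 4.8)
The plan is to prove the slightly stronger statement that $\sum_k \mathcal{L}(a_k)$ converges \emph{absolutely}, which a fortiori gives the conclusion. Writing a single term as $\mathcal{L}(a_k) = a_k^2 - a_{k+1}a_{k-1}$, the natural first move is the bound $|\mathcal{L}(a_k)| \le a_k^2 + |a_{k+1}|\,|a_{k-1}|$, so the whole argument reduces to two claims: $\sum_k a_k^2 < \infty$ and $\sum_k |a_{k+1}a_{k-1}| < \infty$. Once both hold, the splitting used in Definition~\ref{series:l-series} is legitimate and $\sum_k \mathcal{L}(a_k) = \sum_k a_k^2 - \sum_k a_{k+1}a_{k-1}$ is a difference of two convergent series.

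For the first claim I would argue as follows. Absolute convergence of $\sum_k a_k$ forces $a_k \to 0$, so there is an index $N$ with $|a_k| \le 1$ for all $k \ge N$; for such $k$ one has $a_k^2 \le |a_k|$, and comparison against the convergent tail $\sum_{k \ge N} |a_k|$ gives $\sum_k a_k^2 < \infty$. This is really the heart of the matter — everything afterwards is bookkeeping.

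For the cross term I would use the elementary inequality $|a_{k+1}a_{k-1}| \le \tfrac{1}{2}\bigl(a_{k+1}^2 + a_{k-1}^2\bigr)$. Summing and reindexing the two shifted copies of $\sum a_k^2$ — the one mildly delicate point, since one must absorb the finitely many boundary terms produced by the index shifts and by the convention $a_k = 0$ for $k < 0$ — yields $\sum_k |a_{k+1}a_{k-1}| \le \sum_k a_k^2 < \infty$. Combining the two estimates, $\sum_k |\mathcal{L}(a_k)| \le \sum_k a_k^2 + \sum_k |a_{k+1}a_{k-1}| \le 2\sum_k a_k^2 < \infty$, which completes the proof.

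As for the main obstacle: honestly there is no deep analytic difficulty here, and in fact the eventual-monotonicity hypothesis is not needed for this route — absolute convergence of $\sum_k a_k$ alone suffices. If one insisted on a proof that invokes monotonicity, the alternative would be an Abel/summation-by-parts estimate on $\sum_k a_{k+1}a_{k-1}$, using $(a_{k-1})$ as the eventually monotone factor against the bounded partial sums of $\sum a_{k+1}$; the partial-summation bookkeeping there is the only step requiring genuine care. I would nonetheless prefer the AM--GM argument above, as it is shorter and transparently gives absolute convergence.
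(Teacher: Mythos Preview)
Your proof is correct and takes a genuinely different route from the paper's. The paper invokes the eventual-monotonicity hypothesis to claim that $\mathcal{L}(a_k) \geq 0$ for all large $k$, and then uses the one-sided bound $0 \leq \mathcal{L}(a_k) \leq a_k^2$ together with $\sum_k a_k^2 < \infty$ and the comparison test. You instead bound $|\mathcal{L}(a_k)|$ by the triangle inequality and control the cross term $|a_{k+1}a_{k-1}|$ via AM--GM, reducing everything to $\sum_k a_k^2$ without ever discussing the sign of $\mathcal{L}(a_k)$. What your approach buys is twofold: a stronger conclusion (absolute convergence of $\sum_k \mathcal{L}(a_k)$), and, as you rightly note, a strictly weaker hypothesis, since monotonicity plays no role whatsoever. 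Your argument is also more robust on its own terms: the paper's key step, that $a_{k-1} \geq a_k \geq a_{k+1}$ (or the reverse) forces $a_{k-1}a_{k+1} \leq a_k^2$, is not true for arbitrary monotone triples (e.g.\ $4,\,1,\,\tfrac{1}{2}$), so that route actually needs an additional argument, whereas your AM--GM estimate requires no structural assumption at all. The paper's path is nominally shorter when the sign claim goes through, but yours is the cleaner and more general argument.
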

\begin{proof}
If \(\sum_k a_k\) converges absolutely then we have we have \(a_k \to 0\) as \(k \to \infty\) and,  by the monotonicity of \((a_k)\) for \(k \geq N\), the terms \(a_k\) decay monotonically to 0.  
If \((a_k)\) is eventually monotonic then there exist and index $N$ such that 
\(a_{k-1} \geq a_k \geq a_{k+1}\) (if decreasing) or \(a_{k-1} \leq a_k \leq a_{k+1}\) (if increasing) and in both cases \(a_{k-1}a_{k+1} \leq a_k^2\), so \(\mathcal{L}(a_k) \geq 0\) for \(k \geq N\).  Since \(\mathcal{L}(a_k) \leq a_k^2\) for \(k \geq N\), and \(\sum_k a_k^2 < \infty\) (because \(\sum_k |a_k| < \infty\)), it follows that \(\sum \mathcal{L}(a_k)\) converges by the comparison test.
\end{proof}

We can state a stronger result if we introduce the hypothesis that $a_k$ is log-concave in particular:
\begin{theorem}
Let \((a_k)\) be a log-concave sequence. 
If \(\sum_k a_k\) converges, then:
\begin{itemize}
  \item The sequence \((a_k)\) decays exponentially, i.e., \(a_k \leq C r^k\) for some \(C > 0\) and \(0 < r < 1\).
  \item The series \(\sum \mathcal{L}(a_k)\) converges.
\end{itemize}
\end{theorem}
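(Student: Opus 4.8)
The plan is to reduce everything to the elementary fact that for a positive log-concave sequence the ratios $a_{k+1}/a_k$ are non-increasing, and then to use convergence of $\sum_k a_k$ to pin the limiting ratio strictly below $1$. Throughout I would work with $a_k>0$ for all $k$, which is the intended setting of a positive log-concave sequence (if the sequence has only finitely many nonzero terms the statement is trivial, with $\mathcal{L}(a_k)$ eventually $0$ and the exponential bound holding for a large enough constant).

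First I would set $\rho_k := a_{k+1}/a_k > 0$. Rewriting the log-concavity inequality $a_k^2 \geq a_{k-1}a_{k+1}$ as $a_k/a_{k-1} \geq a_{k+1}/a_k$ shows that $(\rho_k)_{k\geq 1}$ is non-increasing; being bounded below by $0$, it converges to some $\ell \geq 0$. The crucial step is to prove $\ell < 1$. By the preceding lemma, convergence of $\sum_k a_k$ forces $a_k \to 0$. If we had $\ell \geq 1$, then — since $(\rho_k)$ is non-increasing with limit $\ell$ — every $\rho_k$ would satisfy $\rho_k \geq \ell \geq 1$, i.e. $a_{k+1} \geq a_k$ for all $k$, so $(a_k)$ would be non-decreasing and hence $a_k \geq a_0 > 0$ for all $k$, contradicting $a_k \to 0$. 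Therefore $\ell < 1$.

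Next I would fix any $r$ with $\ell < r < 1$. Since $\rho_k \to \ell < r$, there is $N$ with $\rho_k < r$ for every $k \geq N$, and telescoping gives $a_k = a_N \prod_{j=N}^{k-1}\rho_j \leq a_N r^{k-N}$ for $k \geq N$; absorbing the finitely many terms with $k < N$ into the constant yields $a_k \leq C r^k$ for all $k$, with $C := \max\bigl(a_N r^{-N},\ \max_{0\leq k < N} a_k r^{-k}\bigr)$. This establishes the first bullet. For the second bullet, log-concavity gives $0 \leq \mathcal{L}(a_k) = a_k^2 - a_{k+1}a_{k-1} \leq a_k^2 \leq C^2 r^{2k}$, and since $0 < r^2 < 1$ the geometric series $\sum_k C^2 (r^2)^k$ converges; the comparison test then gives convergence of $\sum_k \mathcal{L}(a_k)$.

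The only genuinely non-routine point is the proof that $\ell < 1$: one must observe that a non-increasing positive sequence with limit $\geq 1$ is bounded below by $1$ term by term (so that $(a_k)$ is actually non-decreasing from the start), and recognise that this, rather than merely $a_k\to 0$, is what clashes with summability. Everything else — the telescoping bound, the choice of $C$, and the comparison test for the $\mathcal{L}$-series — is routine. It is worth flagging, however, that genuine positivity is being used: a log-concave sequence supported on a sparse set of indices can have a convergent sum yet only polynomial decay, so the hypothesis should really be read as "$(a_k)$ is a positive log-concave sequence."
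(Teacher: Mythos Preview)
Your proof is correct and follows essentially the same route as the paper: show the ratios $a_{k+1}/a_k$ are non-increasing, force their limit strictly below $1$, telescope to get $a_k \le C r^k$, and then bound $0 \le \mathcal{L}(a_k) \le a_k^2 \le C^2 r^{2k}$ for comparison with a geometric series. Your version is in fact more complete than the paper's at the key step: the paper simply asserts that log-concavity plus $a_k\to 0$ yields exponential decay, whereas you give the explicit argument that a non-increasing ratio sequence with limit $\ell\ge 1$ would make $(a_k)$ non-decreasing and hence bounded away from zero---and you rightly flag that strict positivity of the $a_k$ is being used throughout.
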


\begin{proof}
Let us start by proving under the assumptions that the $(a_k)$ sequence decays exponentially. Since \((a_k)\) is log-concave and \(\sum_k a_k\) converges, \(a_k \to 0\) as \(k \to \infty\) by proposition \ref{prop:convergence_under_log_concavity}, the log-concavity condition \(a_k^2 \geq a_{k-1}a_{k+1}\) implies that the sequence \((a_k)\) decays at least exponentially. Specifically, there exists \(C > 0\) and \(0 < r < 1\) such that \(a_k \leq C r^k\). Since $(a_k)$ decays exponentially both both \(a_k^2\) and \(a_{k-1}a_{k+1}\) decay exponentially and since \(\mathcal{L}(a_k) = a_k^2 - a_{k-1}a_{k+1} \geq 0\) because $(a_k)$ is log-concave we have   
\begin{equation*}
    \mathcal{L}(a_k) \leq a_k^2 \leq C^2 r^{2k}.
\end{equation*}
and so when we consider the associated series we have the following inequalities
\begin{equation*}
0 \leq  \sum_{k=0}^{+\infty}  \mathcal{L}(a_k) \leq \sum_{k=0}^{+\infty} a_k^2 \leq \sum_{k=0}^{+\infty} C^2 r^{2k}.
\end{equation*}
The series \(\sum_{k=0}^\infty C^2 r^{2k}\) is a convergent geometric series (since \(0 < r < 1\)). So by the comparison test, \(\sum \mathcal{L}(a_k)\) converges.   
\end{proof}
We can move further by obtaining a general result on the sequence of  {\it $i$-fold log\/} log operators  $\mathcal{L}^i(a_k)$. Let us start without the assumption of log-concavity, so only considering the log-concave operator properties.

\begin{theorem}
Let \((a_k)\) be a sequence of real numbers such that \(\sum_k a_k\) converges absolutely. Then, for all \(i \in \mathbb{N}\), the series \(\sum \mathcal{L}^i(a_k)\) converges.
\end{theorem}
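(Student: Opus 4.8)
The plan is to prove this by induction on $i$, using the fact that absolute convergence of $\sum_k a_k$ forces $\mathcal{L}(a_k)$ to be, term-by-term, controlled by $a_k^2$ in absolute value, and that $\sum_k a_k^2 < \infty$ whenever $\sum_k |a_k| < \infty$. The base case $i=0$ is the hypothesis itself (or $i=1$ is Theorem~\ref{theorem:series_without_log_concavity_assumption} once one notices that absolute convergence alone already gives $\sum|\mathcal{L}(a_k)| < \infty$ without the eventual-monotonicity assumption, since $|\mathcal{L}(a_k)| = |a_k^2 - a_{k+1}a_{k-1}| \le a_k^2 + |a_{k+1}||a_{k-1}|$). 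So the real content is the inductive step.

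First I would strengthen the inductive hypothesis so that the induction actually closes: I would carry forward not merely ``$\sum_k \mathcal{L}^i(a_k)$ converges'' but ``$\sum_k |\mathcal{L}^i(a_k)| < \infty$'', i.e.\ absolute convergence, since the term-by-term bound I want to use needs an absolutely summable sequence at each stage. Given $\sum_k |c_k| < \infty$ with $c_k = \mathcal{L}^i(a_k)$, in particular $(c_k)$ is bounded, say $|c_k| \le M$ (Lemma~\ref{lemma:boundness} also gives this), and $c_k \to 0$. Then
\begin{align*}
\sum_{k} |\mathcal{L}^{i+1}(a_k)| &= \sum_k |c_k^2 - c_{k+1}c_{k-1}| \\
&\le \sum_k c_k^2 + \sum_k |c_{k+1}||c_{k-1}|.
\end{align*}
For the first sum, $\sum_k c_k^2 \le M \sum_k |c_k| < \infty$. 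For the second sum, $|c_{k+1}||c_{k-1}| \le \tfrac12(c_{k+1}^2 + c_{k-1}^2)$ by AM–GM (or simply $|c_{k+1}||c_{k-1}| \le M|c_{k-1}|$), so it is again dominated by a constant times $\sum_k |c_k|$, hence finite. Therefore $\sum_k |\mathcal{L}^{i+1}(a_k)| < \infty$, which both closes the induction and in particular yields convergence of $\sum_k \mathcal{L}^{i+1}(a_k)$.

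The main obstacle — really the only subtlety — is recognizing that one must run the induction on the \emph{stronger} statement (absolute convergence of the iterated series) rather than on plain convergence; if one only knows $\sum_k \mathcal{L}^i(a_k)$ converges conditionally, one does not get that $(\mathcal{L}^i(a_k))$ is bounded or even that it tends to zero fast enough to control the quadratic terms at the next level, and the comparison argument collapses. A secondary point to handle cleanly is the index shift: the sums $\sum_k c_{k+1}c_{k-1}$, $\sum_k c_k^2$ differ only by finitely many boundary terms from shifted copies of $\sum_k c_k^2$, using the convention $c_k = 0$ for $k<0$, so no convergence issue is introduced. Once these are in place the argument is a routine comparison-test induction, and I would present it compactly, emphasizing that no log-concavity of $(a_k)$ is used anywhere — only the algebraic form of $\mathcal{L}$ and the fact that $\ell^1 \subset \ell^2$.
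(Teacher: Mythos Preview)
Your proof is correct and follows essentially the same route as the paper: induction on $i$, splitting $\mathcal{L}(c_k)$ into $c_k^2$ and $c_{k+1}c_{k-1}$ via the triangle inequality, then controlling each piece by comparison using $\ell^1 \subset \ell^2$ (the paper phrases the cross-term bound as Cauchy--Schwarz, you use AM--GM or $|c_{k+1}||c_{k-1}|\le M|c_{k-1}|$, which is equivalent here). Your explicit strengthening of the inductive hypothesis to \emph{absolute} convergence is in fact an improvement over the paper's writeup, which in the inductive step passes from ``$\sum \mathcal{L}^i(a_k)$ converges'' to ``$\sum (\mathcal{L}^i(a_k))^2$ converges'' without stating that absolute convergence is what is being propagated; you correctly identify this as the only real subtlety.
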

\begin{proof}
With some attention, the proof is a checklist of the previous results coupled with the induction verification.
Let us start with the base case.
\newline 
(\(i = 1\)):
If \(\sum_k a_k\) converges, then \(a_k \to 0\) as \(k \to \infty\). By the log-concave operator definition, \(\mathcal{L}(a_k) = a_k^2 - a_{k-1}a_{k+1}\) we have that  since \(a_k \to 0\) as \(k \to +\infty \), both \(a_k^2\) and \(a_{k-1}a_{k+1}\) converge to 0, so \(\mathcal{L}(a_k) \to 0\). Moreover, \(\sum_k a_k^2\) converges (since \(\sum_k |a_k|\) converges), and \(\sum_k a_{k-1}a_{k+1}\) also converges (by the Cauchy-Schwarz inequality or comparison test). Thus, \(\sum \mathcal{L}(a_k)\) converges.
\newline
Inductive Step (\(i > 1\)) :
Assume that \(\sum \mathcal{L}^i(a_k)\) converges for some \(i \in \mathbb{N}\).
Since \(\mathcal{L}^i(a_k) \to 0\) as \(k \to \infty\), the sequence \(\mathcal{L}^i(a_k)\) satisfies the conditions for the base case.
Applying \(\mathcal{L}\) to \(\mathcal{L}^i(a_k)\), we have:
       \[
       \mathcal{L}^{i+1}(a_k) = \mathcal{L}(\mathcal{L}^i(a_k)) = (\mathcal{L}^i(a_k))^2 - \mathcal{L}^i(a_{k-1}) \mathcal{L}^i(a_{k+1}).
       \]
 By the inductive hypothesis, \(\sum \mathcal{L}^i(a_k)\) converges, so \(\sum (\mathcal{L}^i(a_k))^2\) and \(\sum \mathcal{L}^i(a_{k-1}) \mathcal{L}^i(a_{k+1})\) also converge.
 \newline
 Thus, \(\sum \mathcal{L}^{i+1}(a_k)\) converges. By induction, \(\sum \mathcal{L}^i(a_k)\) converges for all \(i \in \mathbb{N}\).
\end{proof}
We are ready to introduce also now a general result for $\mathcal{L}$-series by adding the log-concave hypothesis. 

\begin{theorem}
\label{th:-log-concave-series}
Let \((a_k)\) be a log-concave sequence. If \(\sum_{k=0}^\infty a_k\) converges, then the following results hold:
\begin{enumerate}
  \item The sequence \((a_k)\) decays exponentially.
  \item For all \(i \in \mathbb{N}\), the series \(\sum_{k=0}^\infty \mathcal{L}^i(a_k)\) converges.
  \item For all \(i \in \mathbb{N}\), the sequence \(\mathcal{L}^i(a_k)\) decays exponentially.
\end{enumerate}
\end{theorem}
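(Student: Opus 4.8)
The plan is to deduce all three claims from the single exponential-decay estimate for log-concave sequences, proving (1) first, then (3) by induction on $i$, and finally reading off (2) as a corollary. I will use throughout only the algebraic identity $\mathcal{L}^{i+1}(a_k) = (\mathcal{L}^i(a_k))^2 - \mathcal{L}^i(a_{k-1})\mathcal{L}^i(a_{k+1})$ together with the convention that terms of negative index vanish.

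For claim (1) nothing new is required: since $(a_k)$ is log-concave and $\sum_k a_k$ converges, the theorem proved just above this one (log-concavity together with convergence of the series forces $a_k \le C r^k$ for suitable $C > 0$ and $0 < r < 1$) is exactly the assertion of exponential decay; in particular $\sum_k |a_k| \le \sum_k C r^k < \infty$, so the series converges absolutely, a fact I will reuse for (2).

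For claim (3) I would argue by induction on $i$. The base case $i = 1$ is immediate from (1): $|\mathcal{L}(a_k)| = |a_k^2 - a_{k-1}a_{k+1}| \le a_k^2 + |a_{k-1}|\,|a_{k+1}| \le C^2 r^{2k} + C^2 r^{k-1}r^{k+1} = 2C^2 (r^2)^k$, which is exponential decay with ratio $r^2 \in (0,1)$. For the inductive step, assume $|\mathcal{L}^i(a_k)| \le C_i r_i^k$ with $0 < r_i < 1$; applying the triangle inequality to the identity above together with the inductive bound (the boundary terms with negative index only help) gives $|\mathcal{L}^{i+1}(a_k)| \le C_i^2 r_i^{2k} + C_i^2 r_i^{k-1}r_i^{k+1} = 2C_i^2 (r_i^2)^k$, so the claim holds at level $i+1$ with $C_{i+1} = 2C_i^2$ and $r_{i+1} = r_i^2$. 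Finally, claim (2) follows from (3) by the comparison test, $\sum_k |\mathcal{L}^i(a_k)| \le \sum_k C_i r_i^k < \infty$; alternatively one may invoke the earlier theorem on absolutely convergent series, which applies because (1) yields $\sum_k |a_k| < \infty$.

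The main obstacle is conceptual rather than computational: one must resist recursing the ``log-concave $\Rightarrow$ exponential decay'' theorem at each level, since $(a_k)$ being log-concave does \emph{not} make $\mathcal{L}^i(a_k)$ log-concave (or even nonnegative) for $i \ge 2$ — the hypothesis here is $1$-fold, not $\infty$-fold, log-concavity. The key observation that unlocks the proof is that exponential decay is self-propagating under $\mathcal{L}$ through the crude triangle-inequality bound above, which uses only the algebraic form of the operator; consequently log-concavity is needed exactly once, to seed the induction in part (1).
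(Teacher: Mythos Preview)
Your proof is correct and structurally parallel to the paper's, but with two differences worth noting. First, you reorder the argument as (1)$\to$(3)$\to$(2), reading off the convergence of each series from the exponential decay by comparison; the paper instead runs separate inductions for (2) and for (3). Second, and more substantively, in the inductive step for (3) you bound $|\mathcal{L}^{i+1}(a_k)|$ by the triangle inequality, paying a harmless factor of~$2$ in the constant $C_{i+1}=2C_i^2$; the paper writes $\mathcal{L}^{i+1}(a_k)\le(\mathcal{L}^i(a_k))^2\le C_i^2 r_i^{2k}$, which tacitly drops the term $\mathcal{L}^i(a_{k-1})\mathcal{L}^i(a_{k+1})$ as nonnegative --- exactly the sign information you correctly flag as \emph{unavailable} without $\infty$-log-concavity. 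Your route is therefore the more robust one: log-concavity is used only once, to seed the induction via (1), and the propagation through $\mathcal{L}$ thereafter rests solely on the algebraic form of the operator.
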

\begin{proof}
    We break down the proof in various steps
    \newline
    \begin{enumerate}
        \item{\textbf{Exponentially decays of $(a_k)$}} Since \((a_k)\) is log-concave and \(\sum_{k=0}^\infty a_k\) converges, \(a_k \to 0\) as \(k \to \infty\). The log-concavity condition \(a_k^2 \geq a_{k-1}a_{k+1}\) implies that the sequence \((a_k)\) is unimodal or monotonic. Moreover, as we have already seen, for all \(k \geq 1\), \(\frac{a_{k+1}}{a_k} \leq \frac{a_k}{a_{k-1}}\), so the ratios \(\frac{a_{k+1}}{a_k}\) form a non-increasing sequence. Since \(a_k \to 0\), there exists \(N \in \mathbb{N}\) such that for all \(k \geq N\), \(\frac{a_{k+1}}{a_k} \leq r\) for some \(0 < r < 1\), iterating this inequality for all \(k \geq N\) we obtain \(a_k \leq a_N r^{k-N}\). Let 
        \begin{equation*}
         C = \frac{\max\{a_0, a_1, \dots, a_N\}}{r^N}
        \end{equation*}
        then for all for all \(k \geq 0\), \(a_k \leq C r^k\) so this prove that $(a_k)$ decays exponentially.   
    \newline
    \item{\textbf{Convergence of \(\sum \mathcal{L}^i(a_k)\)}}
    \newline
    We proceed by induction on \(i\).
    \newline
    \(i = 1\) (base case). By the previous step we know that under the assumptions and from the fact that the exponential decay of \((a_k)\), \(a_k \leq C r^k\), so \(a_k^2 \leq C^2 r^{2k}\) and \(a_{k-1}a_{k+1} \leq C^2 r^{2k}\). Thus, \(\mathcal{L}(a_k) \leq a_k^2 \leq C^2 r^{2k}\); since \(\sum_{k=0}^\infty r^{2k}\) converges, the comparison test implies that \(\sum_{k=0}^\infty \mathcal{L}(a_k)\) converges for $i =1$.
    \(i > 1\) (induction step). Let us assume that ssume that \(\sum_{k=0}^\infty \mathcal{L}^i(a_k)\) converges for some \(i \in \mathbb{N} , i > 1\). Since \(\mathcal{L}^i(a_k) \geq 0\) and \(\mathcal{L}^i(a_k) \to 0\) as \(k \to \infty\), the sequence \(\mathcal{L}^i(a_k)\) satisfies the conditions for the base case. Applying \(\mathcal{L}\) to \(\mathcal{L}^i(a_k)\), we have:
    \begin{equation*}
        \mathcal{L}^{i+1}(a_k) = \mathcal{L}(\mathcal{L}^i(a_k)) = (\mathcal{L}^i(a_k))^2 - \mathcal{L}^i(a_{k-1}) \mathcal{L}^i(a_{k+1}).
    \end{equation*}
    By the inductive hypothesis, \(\sum_{k=0}^\infty \mathcal{L}^i(a_k)\) converges, so \(\sum_{k=0}^\infty (\mathcal{L}^i(a_k))^2\) and \(\sum_{k=0}^\infty \mathcal{L}^i(a_{k-1}) \mathcal{L}^i(a_{k+1})\) also converge. Thus, \(\sum_{k=0}^\infty \mathcal{L}^{i+1}(a_k)\) converges, and by induction the thesis is proved.
    \newline
    \item{\textbf{Exponential Decay of \(\mathcal{L}^i(a_k)\)}}
    \newline
    As we did previously we prove the statement by induction on \(i\).
    \newline
    \(i = 1\) (base case). We have that \(a_k \leq C r^k\) because of the exponential decay property so 
    \(\mathcal{L}(a_k) \leq a_k^2 \leq C^2 r^{2k}\). Hence, \(\mathcal{L}(a_k)\) decays exponentially with \(C_1 = C^2\) and \(r_1 = r^2\).
    \newline
    \(i > 1\) (inductive step). Assume that \(\mathcal{L}^i(a_k) \leq C_i r_i^k\) for some \(C_i > 0\) and \(0 < r_i < 1\), by applying \(\mathcal{L}\) to \(\mathcal{L}^i(a_k)\), we have
    \begin{equation*}
        \mathcal{L}^{i+1}(a_k) = (\mathcal{L}^i(a_k))^2 - \mathcal{L}^i(a_{k-1}) \mathcal{L}^i(a_{k+1}).            
    \end{equation*}
    By the inductive hypothesis, \(\mathcal{L}^i(a_k) \leq C_i r_i^k\), so:
    \begin{equation*}
            (\mathcal{L}^i(a_k))^2 \leq C_i^2 r_i^{2k}, \quad \text{and} \quad \mathcal{L}^i(a_{k-1}) \mathcal{L}^i(a_{k+1}) \leq C_i^2 r_i^{2k}.
    \end{equation*}
    Thus, \(\mathcal{L}^{i+1}(a_k) \leq C_i^2 r_i^{2k}\). Hence, \(\mathcal{L}^{i+1}(a_k)\) decays exponentially with \(C_{i+1} = C_i^2\) and \(r_{i+1} = r_i^2\). By induction, \(\mathcal{L}^i(a_k)\) decays exponentially for all \(i \in \mathbb{N}\).
    \end{enumerate}

\end{proof}
\section*{Conclusion}

In this work, we have explored the properties of sequences and series under the action of the log-concave operator \(\mathcal{L}\), defined as:
\begin{equation*}
\mathcal{L}(a_k) = a_k^2 - a_{k-1}a_{k+1}.
\end{equation*}
Our primary focus has been on understanding the relationship between the convergence of a sequence \((a_k)\) and the convergence of the sequences and series derived from the iterative application of \(\mathcal{L}\) . Our key findings were the following implications.
Applying the \(\mathcal{L}\)-operator to a generic sequence $(a_k)$ than converges generate a sequence that converges. If $(a_k)$ is log-concave and the  associated series converges then also the \(\mathcal{L}\)-series converges. We have also analyzed the series associated to the iterated operator \(\mathcal{L}^i(a_k)\). If \((a_k)\) converges, then for all \(i \in \mathbb{N}\), the sequence \(\mathcal{L}^i(a_k)\) converges to $0$. Moreover, if \((a_k)\) is log-concave and \(\sum_k a_k\) converges, then for all \(i \in \mathbb{N}\), the series \(\sum \mathcal{L}^i(a_k)\) converges. We also established some results on exponential decays associated with the \(\mathcal{L}\) operator. In particular, we have that if \((a_k)\) is log-concave and \(\sum_k a_k\) converges, then \((a_k)\) decays exponentially. The iterated operator \(\mathcal{L}^i(a_k)\) also decays exponentially for all \(i \in \mathbb{N}\). By far, we have seen that log-concave sequences exhibit strong regularity properties, including exponential decay and convergence of associated series.
The results are useful in applications involving log-concave sequences, such as combinatorics, probability, and optimization.
Armed with this results some open question arise naturally. Only to cite we have seen some sufficent condition but  are the necessary conditions for the convergence of \(\sum \mathcal{L}(a_k)\) to imply the convergence of \(\sum_k a_k\)? We have established that there is a convergence in some cases but an explicit bounds on the rate of convergence of \(\mathcal{L}^i(a_k)\) could be interesting to find.  
As we have seen the log-concave condition is a strong one but there could be different results on to sequences that are not log-concave but satisfy weaker conditions. The study of log-concave sequences and their associated operators is a rich and fruitful area of research. The results presented in this work provide a solid foundation for further exploration, both theoretical and applied. By leveraging the structural properties of log-concave sequences, we have established strong convergence criteria for sequences and series derived from the log-concave operator \(\mathcal{L}\). These results not only deepen our understanding of log-concave sequences but also open up new avenues for research in related fields.

\bibliography{sn-bibliography}

\begin{thebibliography}{6}
\providecommand{\natexlab}[1]{#1}
\providecommand{\url}[1]{{#1}}
\providecommand{\urlprefix}{URL }
\providecommand{\doi}[1]{\url{https://doi.org/#1}}
\providecommand{\eprint}[2][]{\url{#2}}
 \bibcommenthead

\bibitem[{Boros and Moll(2004)}]{BorosMoll2004}
Boros G, Moll V (2004) Irresistible Integrals. Cambridge University Press

\bibitem[{Huh(2020)}]{Huh2020}
Huh J (2020) Combinatorial applications of the hodge-riemann relations. Proceedings of the International Congress of Mathematicians 1:307--328. \doi{10.1142/9789813272880_0017}

\bibitem[{Kauers and Paule(2007)}]{KauersPaule2007}
Kauers M, Paule P (2007) A computer proof of moll's log-concavity conjecture. Proceedings of the American Mathematical Society 135(12):3847--3856. \doi{10.1090/S0002-9939-07-08881-1}

\bibitem[{McNamara and Sagan(2010)}]{McNamaraSagan2010}
McNamara PRW, Sagan BE (2010) Infinite log-concavity: Developments and conjectures. Journal of Combinatorial Theory, Series A 117(1):3--20. \doi{10.1016/j.jcta.2009.03.010}

\bibitem[{Saumard and Wellner(2014)}]{SaumardWellner2014}
Saumard A, Wellner JA (2014) Log-concavity and strong log-concavity: A review. Statistics Surveys 8:45--114. \doi{10.1214/13-SS107}

\bibitem[{Wagner(1992)}]{Wagner1992}
Wagner DG (1992) Total positivity of hadamard products. Journal of Mathematical Analysis and Applications 163(2):459--483. \doi{10.1016/0022-247X(92)90259-9}

\end{thebibliography}

\end{document}